\theoremstyle{plain}
\newtheorem{theorem}{Theorem}[section]
\newtheorem*{theorem*}{Theorem}
\newtheorem{proposition}[theorem]{Proposition}
\newtheorem{lemma}[theorem]{Lemma}
\theoremstyle{definition}
\newtheorem{definition}[theorem]{Definition}
\newtheorem{remark}[theorem]{Remark}
\newtheorem{example}[theorem]{Example}
\newcommand{\enm}[1]{\ensuremath{#1}}          %
\newcommand{\cal}[1]{\mathcal{#1}}
\renewcommand{\bar}[1]{\overline{#1}}
\newcommand{\CC}{\enm{\mathbb{C}}}
\newcommand{\RR}{\enm{\mathbb{R}}}
\newcommand{\ZZ}{\enm{\mathbb{Z}}}
\newcommand{\PP}{\enm{\mathbb{P}}}
\newcommand{\HH}{\enm{\mathbb{H}}}
\newcommand{\Ee}{\enm{\cal{E}}}
\newcommand{\Gg}{\enm{\cal{G}}}
\newcommand{\Oo}{\enm{\cal{O}}}
\newcommand{\Qq}{\enm{\cal{Q}}}
\renewcommand{\phi}{\varphi}
\renewcommand{\theta}{\vartheta}
\renewcommand{\epsilon}{\varepsilon}
\begin{document}

\title[Surfaces with infinitely many twistor lines]{Algebraic surfaces with infinitely many twistor lines}

\author[A. Altavilla]{A. Altavilla${}^{\ddagger}$}\address{Altavilla Amedeo: Dipartimento di Ingegneria Industriale e Scienze
Matematiche, Universit\`a Politecnica delle Marche, Via Brecce Bianche, 60131,
Ancona, Italy} \email{altavilla@dipmat.univpm.it}

\author[E. Ballico]{E. Ballico${}^{\dagger}$}\address{} \email{}\address{Edoardo Ballico: Dipartimento Di Matematica, Universit\`a di Trento, Via Sommarive 14, 38123, Povo, Trento, Italy} \email{edoardo.ballico@unitn.it}

\thanks{${}^{\dagger,\ddagger}$GNSAGA of INdAM;
 ${}^{\dagger}$MIUR PRIN 2015 ``Geometria delle variet\`a algebriche'';
 ${}^{\ddagger}$SIR grant {\sl ``NEWHOLITE - New methods in holomorphic iteration''} n. RBSI14CFME and SIR grant {\sl AnHyC - Analytic aspects in complex and hypercomplex geometry} n. RBSI14DYEB. The first author was also financially supported by a INdAM fellowship ``{\em mensilit\'a
di borse di studio per l'estero a.a. 2018--2019}'' and wants to thanks the Clifford Research Group at Ghent University where this fellowship has been spent.}

\date{\today }

\subjclass[2010]{Primary 14D21, 53C28; secondary 14J26, 32L25, 30G35}
\keywords{Twistor Fibration; Lines on Surfaces; Rational and Ruled Surfaces; Pl\"ucker Quadric; Slice Regularity}

\begin{abstract} 
We prove that a reduced and irreducible algebraic surface in $\mathbb{CP}^{3}$ containing infinitely many twistor lines cannot have odd degree. Then, exploiting the theory of quaternionic slice regularity and the normalization map of a surface, we give constructive existence results for even degrees.
\end{abstract}
\maketitle

\section{Introduction and Main Results}
In this paper we study integral (i.e. reduced and irreducible) algebraic surfaces in $\CC\PP^{3}$ containing infinitely many twistor lines.
Let $\mathbb{HP}^{1}$ denote the \textit{left} quaternionic projective line. This manifold is diffeomorphic to the 4-sphere  $\mathbb{S}^{4}$.
A twistor line is then a fiber of the usual twistor fibration 
$$
\mathbb{CP}^{1}\to\mathbb{CP}^{3}\stackrel{\pi}{\to}\mathbb{HP}^{1}(\simeq\mathbb{S}^{4}),
$$
defined as
$$
\pi[z_{0},z_{1},z_{2},z_{3}]=[z_{0}+z_{1}j,z_{2}+z_{3}j],
$$
where $j\in\mathbb{H}$ is such that $ij=k$ and $(i,j,k)$ is the standard basis of imaginary units in $\mathbb{H}$.
Motivations to study this fibration come from its link with Riemmannian and complex geometry (see e.g.~\cite{lebrun}).

It is known (see e.g.~\cite{gensalsto}) that twistor lines can be identified with projective lines $\ell\subset\CC\PP^{3}$ such that $j(\ell)=\ell$,
where $j:\mathbb{CP}^{3}\to\mathbb{CP}^{3}$ is the fixed-point-free anti-holomorphic involution given by
$$
j[z_{0},z_{1},z_{2},z_{3}]\mapsto [-\bar z_{1}, \bar z_{0}, -\bar z_{3},\bar z_{2}].
$$
Moreover, the map $j$ induces (via Pl\"ucker embedding), a map (also called $j$), in the Grassmannian 
$Gr(2,4):=\{t_{1}t_{6}-t_{2}t_{3}+t_{4}t_{5}=0\}\subset\CC\PP^{5}$, defined as follows (see e.g. \cite[Section 3]{altavillaballico}):
\begin{equation}\label{mapjgrass}
j([t_{1},t_{2},t_{3},t_{4},t_{5},t_{6}])=[\overline{t_{1}},\overline{t_{5}},-\overline{t_{4}},-\overline{t_{3}},\overline{t_{2}},\overline{t_{6}}],
\end{equation}
and twistor lines can be identified then as points in $Gr(2,4)$ which are fixed by this map $j$.

The study of algebraic surfaces from the twistor projection point of view is somehow complete in the case of planes and quadrics~\cite{chirka,sv1},
but still partial in the case of cubics~\cite{armstrong,APS,sv2}. In a series of papers the authors have given general results
on this topic by exploiting analytic~\cite{altavilla,altavillasarfatti} (see also~\cite{gensalsto}) and algebraic~\cite{altavillaballico,altavillaballico2,ballico}
methods.

The goal of this paper is to use classical algebraic geometry and quaternionic slice regularity to
show that there are not odd degree integral surfaces containing infinitely many twistor lines and that
 for each even degree there exists at least one. We will prove this last statement by giving two methods of construction.
First of all, thanks to~\cite[arXiv version v1, Remark 14.5]{sv1}, an integral degree $d$ algebraic surface containing 
more than $d^{2}$ twistor lines has to be $j$-invariant, hence a surface containing infinitely many twistor
lines is $j$-invariant.
Surfaces with infinitely many lines are ruled and non-normal and so we will deal with this class.
However we will show with a simple argument that
 cones are not allowed. Given a ruled surface $Y$ we will recall its normalization map $u:\PP(\Ee)\to Y$, $\Ee$ being
a rank 2 vector bundle over a smooth curve $C$. Given such a vector bundle $\Ee$ and $L\subset\Ee$
a rank 1 subsheaf of maximal degree, we say that $\Ee$ has the property $\pounds$ if $L$ is the unique
rank 1 subsheaf of maximal degree (see Definition~\ref{pound}). Afterwards, in Section 2, we will recall some known fact on the stability of rank 2 vector bundles over a smooth curve and we will link them to the property $\pounds$. 

Using the results proven in Section 2, in Section 3, assuming that the surface $Y$ is integral and ruled by twistor lines, we are able to  prove that its normalization $\PP(\Ee)$ is such that $\Ee$ has not $\pounds$ and,
equivalently that $\Ee$ is semi-stable (see Theorem~\ref{x1}). As a direct consequence we obtain that no odd degree integral rational surface with infinitely many  twistor lines exists. More precisely, we prove the following.

\begin{proposition}\label{x2}
Let $Y\subset \CC\PP^3$ be an integral rational surface containing infinitely many twistor lines. Then $\deg (Y)$ is even
and $\CC\PP^1\times \CC\PP^1$ is the normalization of $Y$.
\end{proposition}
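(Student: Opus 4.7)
The plan is to combine Theorem~\ref{x1} with the classification of rational ruled surfaces. Since $Y$ is integral and contains infinitely many twistor lines it is ruled; the introduction already excludes the case that $Y$ is a cone, so Theorem~\ref{x1} applies and yields a normalization map $u\colon\PP(\Ee)\to Y$, with $\Ee$ a rank $2$ semi-stable vector bundle on a smooth curve $C$. Because $Y$ is rational so is $\PP(\Ee)$, and since a $\PP^{1}$-bundle over $C$ is rational exactly when $C$ is, we deduce $C\cong\CC\PP^{1}$.

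Next I would pin down $\Ee$. By Grothendieck's splitting theorem $\Ee\cong\Oo_{\CC\PP^{1}}(a)\oplus\Oo_{\CC\PP^{1}}(b)$ with $a\ge b$; the subsheaf $\Oo(a)$ has slope $a$ while $\mu(\Ee)=(a+b)/2$, so semi-stability forces $a\le(a+b)/2$, i.e.\ $a=b$. Since $\PP(\Ee\otimes L)\cong \PP(\Ee)$ for any line bundle $L$ on $C$, we may replace $\Ee$ by $\Oo\oplus\Oo$ without changing the surface, and hence the normalization is $\PP(\Ee)\cong \CC\PP^{1}\times\CC\PP^{1}$, as claimed.

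It remains to show that $\deg(Y)$ is even. The normalization $u$ is birational, and I would argue that the generic twistor line in $Y$ is the image of exactly one fiber of one of the two rulings of $\CC\PP^{1}\times\CC\PP^{1}$, on which $u$ restricts to an isomorphism onto that line. Writing $u^{*}\Oo_{\CC\PP^{3}}(1)\cong \Oo(a,b)$ for suitable $a,b\ge 0$, the restriction to such a fiber is $\Oo(1)$, which forces (say) $b=1$. Then
\[
\deg(Y)=\bigl(u^{*}\Oo_{\CC\PP^{3}}(1)\bigr)^{2}=2ab=2a,
\]
which is even.

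The one point I expect to require extra care is the justification that the twistor ruling of $Y$ lifts to a ruling of $\CC\PP^{1}\times\CC\PP^{1}$ by a \emph{single} fiber class, giving fiber-restriction exactly $\Oo(1)$ (as opposed to a positive multiple). This should follow from the fact that $u$ is an isomorphism over the smooth locus of $Y$: a generic twistor line meets the smooth locus in a dense open subset, and so has a unique preimage component on which $u$ is birational; being a smooth rational curve of self-intersection zero in $\CC\PP^{1}\times\CC\PP^{1}$, that component must be a fiber of one of the two natural projections, and $u$ restricted to it is an isomorphism onto the line.
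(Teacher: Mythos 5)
Your proof is correct, and its first half coincides with the paper's: invoke Theorem~\ref{x1} to get semistability of $\Ee$, use rationality to force $C\cong\CC\PP^1$, and then conclude $\Ee\cong\Oo(a)\oplus\Oo(a)$ (you via Grothendieck splitting plus the slope inequality, the paper via $s(\Ee)=0$ together with Remark~\ref{Hir} and Lemma~\ref{decomp} --- the same computation in different clothing), whence the normalization is $\CC\PP^1\times\CC\PP^1$. Where you diverge is the parity of $d$: the paper normalizes $\Ee$ so that $\deg(\Ee)=\deg(Y)$ and reads off $d=\deg(\Ee)=2\deg(L)$ directly, while you twist $\Ee$ down to $\Oo\oplus\Oo$, write $u^{*}\Oo_{\CC\PP^3}(1)\cong\Oo(a,b)$, use the fact that the fibers of the ruling map isomorphically to the lines of $Y$ to get $b=1$, and compute $d=2ab=2a$. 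The two are equivalent, but your version makes explicit a point the paper leaves implicit, namely which representative $\Ee$ of the $\CC\PP^1$-bundle has degree equal to $\deg(Y)$ (the right choice is $\Ee=v_{*}u^{*}\Oo_{\CC\PP^3}(1)$); conversely, your ``extra care'' step about lifting the twistor ruling is already built into the paper's setup at the start of Section~3, where the preimages of the lines of the ruling are identified with the fibers of $v$, so you could simply cite that rather than re-deriving it from self-intersection zero.
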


With a bit more effort we are able to remove the rationality hypothesis and to prove the following next result.

\begin{theorem}\label{bbb1}
Let $Y\subset \mathbb{CP}^{3}$ be an integral surface containing infinitely many twistor lines. Then $\deg (Y)$ is even. 
\end{theorem}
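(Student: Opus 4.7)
The plan is to use the $j$-invariance of $Y$ (granted by the quoted Remark 14.5 of~\cite{sv1}, since infinitely many twistor lines certainly exceeds the threshold $d^{2}$ with $d=\deg(Y)$) and to read the parity of $d$ off a single intersection number, avoiding the normalization machinery of Theorem~\ref{x1} entirely.

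Concretely, first pick a twistor line $\ell\subset\CC\PP^{3}$ with $\ell\not\subset Y$. Such an $\ell$ exists because the twistor lines form a real $4$-dimensional family $\simeq\mathbb{HP}^{1}$, whereas the lines of $\CC\PP^{3}$ lying on the surface $Y$ sweep out at most a complex one-parameter subfamily; equivalently, if every twistor line were contained in $Y$ then $Y$ would equal $\CC\PP^{3}$, contradicting $\dim Y=2$. By B\'ezout the scheme $Y\cap\ell$ has length exactly $d$. Since $Y$ and $\ell$ are both $j$-invariant, $j$ acts on $Y\cap\ell$; being an automorphism of $\CC\PP^{3}$ it preserves local intersection multiplicities; and since $\ell$ is a twistor line, $j|_{\ell}$ is a fixed-point-free anti-holomorphic involution of $\ell\cong\CC\PP^{1}$, so $j$ acts freely on the support of $Y\cap\ell$. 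The scheme therefore breaks into $j$-orbits $\{p,j(p)\}$, each contributing an even length $2\,\op{mult}_{p}(Y\cdot\ell)$, which forces $d$ to be even.

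The one slightly delicate step is the existence of a twistor line avoiding $Y$, which is settled above by a crude dimension count; everything else is standard B\'ezout together with the fixed-point-freeness of $j$ on a twistor line. A purely algebraic variant bypasses twistor lines altogether: the lift of $j$ to $\CC^{4}$ is of quaternionic type ($j^{2}=-\op{id}$), so the induced anti-linear action on $\op{Sym}^{d}(\CC^{4})^{*}$ has square $(-1)^{d}$, and a nonzero $j$-invariant line in this space---which is exactly what a $j$-invariant homogeneous form of degree $d$ provides---can only exist when $(-1)^{d}=+1$, i.e.~$d$ even.
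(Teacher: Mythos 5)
Your proof is correct, and it takes a genuinely different and more elementary route than the paper's. The paper deduces evenness from the geometry of the normalization $u:\PP(\Ee)\to Y$: it produces two distinct sections $T_1,T_2$ of the ruling (the images $D$ and $j(D)$ of a maximal-degree line subbundle, using Lemma~\ref{jD} to see $j(D)\ne D$), shows $T_1\cdot T_2$ is even by pairing the points of $T_1\cap T_2$ under the induced anti-holomorphic involution, and then computes $d=T_1\cdot T_1+2x$ in $\mathrm{Pic}(X)$. You instead work directly in $\CC\PP^3$: once $j(Y)=Y$ is known from the $d^2$ bound, you intersect $Y$ with a single twistor line $\ell\not\subset Y$ (which exists because the twistor fibers cover $\CC\PP^3$, so otherwise $Y=\CC\PP^3$) and pair up the points of the length-$d$ scheme $Y\cap\ell$ under the fixed-point-free involution $j|_{\ell}$; the equality of local multiplicities at $p$ and $j(p)$ is the same anti-holomorphic-invariance principle the paper invokes to get $c_{j(p)}=c_p$, so $d$ is even. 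Your linear-algebra variant is also sound: the lift $\tilde{\j}$ to $\CC^4$ satisfies $\tilde{\j}^2=-\mathrm{id}$, so the induced anti-linear operator $A$ on degree-$d$ forms satisfies $A^2=(-1)^d\mathrm{id}$, and an anti-linear $A$ with $A^2=-\mathrm{id}$ admits no eigenvector (since $AF=cF$ would force $|c|^2=-1$); integrality enters exactly to guarantee that the defining form of $Y$ is irreducible and hence is a genuine $A$-eigenvector rather than merely having its factors permuted. Both of your arguments prove the stronger statement that every integral $j$-invariant surface has even degree, with no ruledness or infinitude of lines needed beyond establishing $j$-invariance; what they do not give is the structural information about the normalization (semistability of $\Ee$, $T_2\sim T_1$, and $\CC\PP^1\times\CC\PP^1$ in the rational case) that the paper's longer computation yields and uses elsewhere.
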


Afterwards we give two existence results for even degrees by showing constructive methods.
In the first result, using the theory of slice regularity (see~\cite{genstostru} for an overview in the quaternionic setting), and the results contained in~\cite{altavilla, altavillasarfatti, gensalsto}, we are able to solve the problem with the hypothesis of rationality. In fact a slice regular function is a quaternionic function of a quaternionic variable
$f:\Omega\subset\HH\to\HH$ whose restrictions to any complex plane $\CC_{p}=span_{\RR}\langle 1, p\rangle\subset \HH$, such that $p^{2}=-1$, are holomorphic functions. 
Fixed any orthonormal basis $\{1,i,j,k\}\subset\HH$ it is possible to split a slice regular function $f$
as $f=g+hj$, where $g|_{\CC_{i}}, h|_{\CC_{i}}$ are complex holomorphic function of a $\CC_{i}$-variable.
The key result is then that any slice regular function $f$ can be \textit{lifted} (via $\pi^{-1}$), to
a holomorphic function $\tilde{f}:\Qq\subset\CC\PP^{3}\to\CC\PP^{3}$, where $\Qq\cong \CC\PP^{1}\times\CC\PP^{1}$, and the expression of $\tilde{f}$ is explicitly given in terms of the
splitting $f=g+hj$. With this tool, the precise result that we are able to prove is the following.

\begin{proposition}\label{x3} 
For each even integer $d\ge 2$ there is a rational degree $d$ ruled surface $Y\subset \CC\PP^3$ containing infinitely many twistor lines.
\end{proposition}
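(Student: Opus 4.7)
The plan is to use the lifting theorem for slice regular functions recalled just before the statement to construct, for each even $d\ge 2$, an explicit rational ruled surface $Y_d\subset \CC\PP^3$ of degree $d$ containing infinitely many twistor lines. Given any polynomial slice regular $f\colon\HH\to\HH$ with splitting $f=g+hj$, the lift is a morphism $\tilde f\colon\Qq\to\CC\PP^3$, where $\Qq\cong \CC\PP^1\times\CC\PP^1$, whose four components are bihomogeneous polynomials whose bidegree can be read off from those of $g$ and $h$. The defining commutation $\pi\circ\tilde f = f\circ\pi$ means that $\tilde f$ sends the twistor lines of $\Qq$ (the fibres of $\pi|_{\Qq}$) into twistor lines of $\CC\PP^3$. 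Because any non-constant slice regular $f$ has image of non-empty interior in $\HH\subset\HH\PP^1$, the surface $Y:=\tilde f(\Qq)$ automatically carries an uncountable family of twistor lines, and is rational and ruled as the image of $\CC\PP^1\times\CC\PP^1$.

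To realize every even degree $d=2m\ge 2$, I would select for each $m$ a polynomial $f_m=g_m+h_m j$ whose splitting data are tuned so that the components of $\tilde f_m$ are sections of $L:=\Oo_{\CC\PP^1\times\CC\PP^1}(a,b)$ with $2ab=d$; the natural choice is $(a,b)=(1,m)$. Then $\tilde f_m^{\ast}\Oo_{\CC\PP^3}(1)=L$, and the projection formula yields
\[
\deg(Y_m)\cdot\deg(\tilde f_m)\;=\;L\cdot L\;=\;2ab\;=\;d,
\]
where $\deg(\tilde f_m)$ denotes the mapping degree of $\tilde f_m$ onto $Y_m$. Whenever $\tilde f_m$ is birational onto its image this forces $\deg(Y_m)=d$, exactly as required; the resulting $Y_m$ is then a rational ruled surface whose normalization is $\CC\PP^1\times\CC\PP^1$, consistent with Proposition~\ref{x2}.

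The main obstacle is therefore choosing $f_m$ so that the induced $\tilde f_m$ is genuinely birational onto $Y_m$ and $Y_m$ is two-dimensional and integral (rather than collapsing to a curve or acquiring extra components, either of which would wreck the degree count). The cleanest way to certify birationality is to exhibit, for a concrete $f_m$, one point $p\in Y_m$ whose scheme-theoretic fibre $\tilde f_m^{-1}(p)$ consists of a single reduced point, since this alone forces $\deg\tilde f_m=1$. Accordingly I would propose an explicit ansatz\,—\,for instance, mild perturbations of $f_m(q)=q^m$ in the $j$-direction calibrated so that the lift lands in bidegree $(1,m)$\,—\,and verify injectivity at one distinguished point together with a direct dimensional check that $Y_m$ is a surface. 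The twistor content and the rationality/ruledness of $Y_m$ come for free from the lifting construction; the entire proof thus reduces to this single explicit injectivity check, which is the only computational ingredient of real substance.
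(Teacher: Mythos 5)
Your overall strategy is the same as the paper's: parametrize $Y$ by the twistor lift $\tilde f$ of a slice regular function, note that its components are bihomogeneous of bidegree $(1,m)$, and compute $\deg(Y)\cdot\deg(\tilde f)=\Oo_{\CC\PP^1\times\CC\PP^1}(1,m)^2=2m=d$. But the two points you gloss over are exactly the ones carrying the content. First, your justification for the infinitude of twistor lines is a non sequitur: the fact that $\pi(Y)$ has non-empty interior in $\HH\PP^1$ says nothing about twistor fibers \emph{contained in} $Y$ (a surface can never contain the full fibers over an open subset of $\HH\PP^1$). The actual mechanism, which the paper imports from Remark~4.9 of the Altavilla--Sarfatti reference, is the reality condition in the lift $([s,u],[1,v])\mapsto [s,u,sg(v)-u\hat h(v),sh(v)+u\hat g(v)]$ with $\hat g(v)=\overline{g(\bar v)}$ and $\hat h(v)=\overline{h(\bar v)}$: the image of the ruling fiber over $v$ coincides with the twistor fiber over $g(v)+h(v)j$ precisely when $v\in\RR$, so one gets a circle's worth of twistor lines (the fibers over $f(\RR)$), pairwise distinct as soon as $f|_{\RR}$ is non-constant. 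Your parenthetical ``the fibres of $\pi|_{\Qq}$'' conflates the ruling fibers of $\Qq$ (all of which map to lines, but generically not to twistor lines) with the twistor lines actually contained in $\Qq$.

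Second, the birationality of $\tilde f$ --- the only thing that converts $\deg(Y)\deg(\tilde f)=2m$ into $\deg(Y)=d$ --- is left as a plan (``I would propose an explicit ansatz\dots''), so the proof is not complete; and this check is not a formality. For instance $f(q)=q^m$ satisfies the reality condition but its lift $[s,u,sv^m,uv^m]$ maps $m$-to-$1$ onto the quadric $z_0z_3=z_1z_2$, so $\deg(Y)=2$, not $2m$; one genuinely needs to perturb (e.g.\ take $h\ne 0$ of low degree) and then verify $\deg(\tilde f)=1$. Your single-reduced-fibre criterion is a sound way to do this (together with the observations that $\tilde f$ has no base points, since two of its components are $s$ and $u$, and that $\Oo(1,m)^2>0$ rules out a one-dimensional image), but it must actually be carried out for a concrete $f_m$ for each $m$. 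To be fair, the paper's own proof also delegates both the twistor-line count and the birational choice to the cited reference, so your proposal reaches essentially the same level of completeness via the same route --- but with the added defect that the stated reason for the existence of infinitely many twistor lines is incorrect.
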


The last (constructive) result of the paper states that it is possible to select a smooth curve $C$ to construct an integral surface $Y$ with infinitely many twistor lines, such that  $u:\PP(\Ee)\to Y$ is its normalization with $\Ee$ is a rank 2 vector bundle over a smooth curve $C$. 
The precise statement is the following.

\begin{theorem}\label{ox1}
Let $C$ be a smooth and connected complex projective curve defined over $\RR$ and with $C(\RR )\ne \emptyset$. Fix an integer $d_0$. Then there is an integer $d\ge d_0$
and a degree $d$ integral surface $Y\subset \CC\PP^3$ such that $Y$ contains infinitely many twistor lines and the normalization of $Y$ is a $\CC\PP^1$-bundle over $C$.
\end{theorem}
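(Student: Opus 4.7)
The plan is to mimic the construction of Proposition~\ref{x3}, with the real curve $C$ playing the role of $\CC\PP^1$. The underlying observation is that given any holomorphic map $\phi:C\to\CC\PP^3$, the composition $\psi:=j\circ\phi\circ\sigma$ is again holomorphic (being the composition of two antiholomorphic maps with one holomorphic map), where $\sigma:C\to C$ denotes the antiholomorphic involution coming from the real structure of $C$. Moreover $\phi(c)\ne\psi(c)$ for every $c$, because $j$ is fixed-point-free, so the line $\ell_c\subset\CC\PP^3$ through $\phi(c)$ and $\psi(c)$ is always well defined. For $c\in C(\RR)$ one has $\sigma c=c$, hence $\psi(c)=j(\phi(c))$ and $\ell_c$ is a twistor line; since $C(\RR)$ is a nonempty smooth real $1$-manifold it is infinite, which automatically produces infinitely many twistor lines on the surface we are going to build.

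To carry this out, fix a line bundle $M\in\op{Pic}^m(C)$ with $m$ sufficiently large (e.g.\ $m\ge\max(2g+1,\lceil d_0/2\rceil)$, where $g$ is the genus of $C$), and choose four generic sections $u_0,u_1,u_2,u_3\in H^0(C,M)$, so that $\phi(c):=[u_0(c):u_1(c):u_2(c):u_3(c)]$ is a morphism generically injective onto its image. Writing out $j$ in coordinates, the homogeneous components of $\psi$ are the four holomorphic functions $v_i(c):=\overline{u_{\tau(i)}(\sigma c)}$ (up to signs determined by $j$), where $\tau$ is the permutation $0\leftrightarrow 1,\ 2\leftrightarrow 3$; these are global sections of the holomorphic line bundle $N:=\sigma^*\bar M$, which has degree $m$. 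The four pairs $(u_i,v_i)$ are global sections of $\Ee:=M\oplus N$ and induce a surjection $\Oo_C^4\twoheadrightarrow\Ee$, giving an embedding $\PP(\Ee)\hookrightarrow C\times\CC\PP^3$. Composing with the second projection yields a morphism $u:\PP(\Ee)\to\CC\PP^3$ whose restriction to the fiber over $c$ is a linear isomorphism onto $\ell_c$; its image $Y:=u(\PP(\Ee))$ is therefore irreducible, $j$-invariant, and ruled by the family $\{\ell_c\}_{c\in C}$.

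For a sufficiently general choice of the $u_i$, the classifying map $C\to Gr(2,4)$, $c\mapsto\ell_c$, is non-constant and generically injective, and two general lines $\ell_c,\ell_{c'}$ do not meet in $\CC\PP^3$ (meeting is a codimension-one condition in $Gr(2,4)$). It follows that $u$ is birational onto $Y$, so $Y$ is reduced (hence integral) and $u$ is the normalization; since $\PP(\Ee)\to C$ is a $\CC\PP^1$-bundle, this is exactly what the theorem requires. The degree of $Y$ equals $\Oo_{\PP(\Ee)}(1)^2=\deg\Ee=2m\ge d_0$, and the parity is consistent with Theorem~\ref{bbb1}.

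The principal obstacle is the genericity step: one must show that on a Zariski-dense open subset of the parameter space $H^0(C,M)^{\oplus 4}$ the surface $Y$ is integral of the expected degree $2m$, the classifying map to $Gr(2,4)$ is non-constant and generically injective, and $u$ is birational onto $Y$. Each of these is an open condition; their non-emptiness can be verified either via a dimension count or by degenerating the rational construction of Proposition~\ref{x3} to a smooth curve of higher genus and deforming back. The hypothesis $C(\RR)\ne\emptyset$ plays no role in this step beyond guaranteeing twistor lines in the resulting family.
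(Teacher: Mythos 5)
Your construction is genuinely different from the paper's, and the idea behind it is sound: you build the $j$-invariant family of lines directly in $\CC\PP^3$ by joining $\phi(c)$ to $\psi(c)=j(\phi(\sigma c))$, whereas the paper works inside $Gr(2,4)\subset\CC\PP^5$, restricting to the linear space $F=\{t_2-t_5=t_3+t_4=0\}$ on which the linear part $\sigma$ of $j$ is the identity, so that $E=Gr(2,4)\cap F$ is the explicit real quadric $t_1t_6=t_4^2+t_5^2$ whose real points are twistor lines; any integral curve $D\subset E$ defined over $\RR$ and normalized by $C$ then does the job. Both arguments rest on the same principle --- real points of $C$ produce $j$-fixed points of $Gr(2,4)$ --- but the paper's coordinates make every verification an elementary computation with the functions $f_1=f_4^2+f_5^2,f_4,f_5$.

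Two points in your write-up are genuine gaps. First, the assertion that ``$\phi(c)\ne\psi(c)$ for every $c$, because $j$ is fixed-point-free'' is not what fixed-point-freeness gives: it yields $j(\phi(\sigma c))\ne\phi(\sigma c)$, which is the statement you want only when $\sigma c=c$, i.e.\ on $C(\RR )$. For $c\notin C(\RR )$ the two points may coincide. What is true is that $\phi$ and $\psi$ cannot coincide as maps (evaluate at a point of $C(\RR )$ and use that $j$ has no fixed points --- this is where $C(\RR )\ne\emptyset$ enters), so the coincidence locus is finite; but at those points the evaluation $\Oo _C^{\oplus 4}\to M\oplus N$ fails to be surjective, the classifying bundle is a proper subsheaf of $M\oplus N$ of strictly smaller degree, and the computation $\deg (Y)=2m$ no longer follows as stated. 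Second, and more seriously, the whole content of the theorem sits in the paragraph you label ``the principal obstacle'': non-constancy and generic injectivity of the classifying map $C\to Gr(2,4)$ (needed even to get infinitely many \emph{distinct} twistor lines over $C(\RR )$, and hence to exclude that $Y$ is a cone), birationality of $u$ onto $Y$, and control of the degree. Saying that these are open conditions does not establish that they hold for some choice, and the proposed degeneration to Proposition~\ref{x3} cannot work as described, since a smooth curve of genus $g>0$ is not a deformation of $\CC\PP^1$. The paper discharges exactly this step by hand (its Claim 1), choosing $f_4,f_5$ so that $\psi$ is birational onto its image and $f_5^2$ has degree $\ge d_0$; some equally concrete argument is still required to complete your version.
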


The proof of this last theorem is rather constructive and it is exploited in the last example to generate a class of integral ruled surfaces of even degree each of them containing infinitely many twistor lines.


%
%
%
%
%

\section{Preliminary results}

The main reference for this section is~\cite[V]{h}.
Let $C$ be a smooth and connected complex projective curve of genus $g\ge 0$, $\Ee$ be a rank $2$ holomorphic
vector bundle on $C$ and $L\subset \Ee$ be a rank $1$ subsheaf of $\Ee$ with maximal degree. As in~\cite[V.2]{h} or~\cite{ln}, but with opposite sign, set $s(\Ee):= 2\deg (L)-\deg (\Ee ) $. Note that for any line bundle $R$ on $C$, the line subbundle $L\otimes R$ of
$\Ee
\otimes R$ is a rank $1$ subsheaf of $\Ee\otimes R$ with maximal degree and hence $s(\Ee )=s(\Ee \otimes R)$. Thus the integer
$s (\Ee)$ depends only from the isomorphism classes of the $\CC\PP^1$-bundle $\PP (\Ee)$. We have $s(\Ee )\equiv \deg (\Ee
)\pmod{2}$. Thus the parity classes of the integer $\deg (\Ee)$ and $s(\Ee )$ are constant in connected families of rank $2$
vector bundles on $C$ and the parity class of $s(\Ee )$ is a deformation invariant for the smooth surface $\PP (\Ee )$.

\begin{definition}\label{pound}
Let $C$ be a smooth and connected complex projective curve of genus $g\ge 0$, $\Ee$ be a rank $2$ holomorphic
vector bundle on $C$ and $L\subset \Ee$ be a rank $1$ subsheaf of $\Ee$ with maximal degree.
We say
that
$\Ee$ \textit{has (the property)}
$\pounds$ if
$L$ is the unique rank $1$ subsheaf of $\Ee$ with maximal degree. 
\end{definition}

Thanks to previous considerations, for any line bundle $R$ on $C$ the vector bundle $\Ee$ has
$\pounds$ if and only if $\Ee \otimes R$ has $\pounds$.
 Hence we are allowed to say if a $\CC\PP^1$-bundle  $\PP (\Ee)$ has
$\pounds$ or not. 
\begin{remark}\label{Hir}
If the genus $g$ of $C$ is equal to zero, then $s(\Ee)=s$ if and only if $\PP (\Ee )\cong F_{-s}:= \PP(\Oo _{\PP^1}\otimes
\Oo_{\PP^1}(s))$, i.e. is the Hirzebruch surface with invariant $-s$ (see~\cite[V 2.13, 2.14]{h}).
\end{remark}

By the
definition of stability and semistability for rank $2$ vector bundles we see that $s(\Ee)<0$ (resp. $s(\Ee)\le 0$) if and only if $\Ee$
is stable (resp. semistable), moreover, $s (\Ee )=0$ if and only if $\Ee$ is strictly semistable, i.e. it is semistable but not stable (see~\cite[V Exercise 2.8]{h} and also~\cite{ln}).

\begin{lemma}\label{x1.001}
Let $C$ be a smooth curve and  $\Ee$ be a rank $2$ vector bundle on $C$ without $\pounds$. Then $\Ee$ is semistable.
\end{lemma}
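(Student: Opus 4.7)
The plan is to prove the contrapositive: if $\Ee$ is not semistable, then $\Ee$ has $\pounds$. Recall from the paragraph just before the lemma that $\Ee$ is not semistable exactly when $s(\Ee)>0$, i.e., when the maximal-degree rank $1$ subsheaf $L$ satisfies $2\deg(L) > \deg(\Ee)$.

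The first step is to observe that any rank $1$ subsheaf of maximal degree is automatically saturated in $\Ee$. Indeed, if $\widetilde L$ denotes the saturation of $L$ in $\Ee$, then $\widetilde L$ is again a rank $1$ subsheaf and $\deg(\widetilde L)\ge \deg(L)$; by maximality we get $\widetilde L = L$. In particular $\Ee/L$ is torsion free, hence a line bundle, with
\[
\deg(\Ee/L) = \deg(\Ee)-\deg(L) < \deg(L),
\]
the inequality coming from $s(\Ee)>0$.

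Now let $L'\subset \Ee$ be any other rank $1$ subsheaf of maximal degree, and consider the composition
\[
\varphi\colon L' \hookrightarrow \Ee \twoheadrightarrow \Ee/L.
\]
Both source and target are line bundles on the smooth curve $C$, and $\deg(L') = \deg(L) > \deg(\Ee/L)$; hence any nonzero morphism $L'\to \Ee/L$ is impossible, forcing $\varphi=0$. Thus $L'\subseteq L$ as subsheaves of $\Ee$, and since $L'$ has the same degree as the line bundle $L$, the inclusion is an equality. This shows that $L$ is the unique rank $1$ subsheaf of $\Ee$ of maximal degree, i.e., $\Ee$ has $\pounds$, completing the contrapositive.

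The proof is essentially routine once one knows the saturation trick; the only mildly delicate point is the first step (that a maximal-degree rank $1$ subsheaf is automatically saturated), without which $\Ee/L$ might have torsion and the degree comparison argument on line bundles would need a small modification.
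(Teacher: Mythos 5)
Your proposal is correct and follows essentially the same route as the paper's own proof: both argue the contrapositive, use maximality of $\deg(L)$ to see that $\Ee/L$ is torsion-free (hence a line bundle), and then use $\deg(L')>\deg(\Ee/L)$ to kill the composition $L'\to\Ee/L$, forcing $L'\subseteq L$ and finally $L'=L$ by equality of degrees. No gaps.
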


\begin{proof}
Let $\Gg$ be a rank $2$ vector bundle on $C$ which is not semistable. It is sufficient to prove that $\Gg$ has $\pounds$.  Let $L\subset \Gg$ be a maximal degree rank $1$ subsheaf. Since
$L$ has maximal degree, then $\Gg/L$ has no torsion, i.e. (since $C$ is a smooth curve) it is a line bundle. We have the following exact sequence
\begin{equation*}
0 \to L \to \Gg\stackrel{v}{\to} \Gg/L\to 0.
\end{equation*}
Let $M$ be any maximal degree rank $1$ subsheaf of $\Gg$. We have $\deg (M) = \deg (L)$. Since $\Gg$ is not semistable, we have $\deg (L)>\deg (\Gg/L)$ and so
$\mathrm{Hom}(M,\Gg/L) =0$. Thus $v_{|M} \equiv 0$, i.e. $M\subseteq L$. Since $\deg (M)=\deg (L)$, we get $M=L$, i.e. $\Gg$ has $\pounds$.
\end{proof}

%

Let $C$ be any smooth and connected projective curve of genus $g\ge 2$. See~\cite{ln} for a huge number of examples of stable
rank
$2$ vector bundles on $C$ with $\pounds$ or without $\pounds$.

It is proved in~\cite{ln} that if $\Ee$ is a
general rank $2$
stable vector bundle on $C$ with degree $d$, then the integer $s(\Ee )$ is the only integer in $\{-g,1-g\}$ which is $\equiv
d\mod{2}$. If $s(\Ee )=-g$, then $\Ee$ has $\infty ^1$ maximal degree rank $1$ subbundles and hence it has not $\pounds$.
If $s(\Ee )=1-g$ and $\Ee$ is general, then $\Ee$ has exactly $2^g$ maximal degree $1$ subbundles (a result discovered by C. 
Segre in 1889).

We recall the following well-known observation which characterizes the property $\pounds$ in
the case of strictly semi-stability.

\begin{lemma}\label{decomp}
Assume $s(\Ee )=0$. $\Ee$ has $\pounds$ if and only if $\Ee$ is indecomposable. If $\Ee$ is decomposable, then either $\Ee
\cong L^{\oplus 2}$ for some line bundle $L$ (and in this case $\Ee$ has $\infty ^1$ rank $1$ subsheaves with maximal degree) 
or $\Ee \cong L\oplus M$ with $L, M$ line bundles, $\deg (L)=\deg (M)$ and $L\ncong M$ (and in this  case $L$ and $M$ are the only
line subbundles of $\Ee$ with maximal degree).
\end{lemma}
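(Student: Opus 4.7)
My plan is to start from a preliminary observation that pins down the structure of any maximal-degree rank $1$ subsheaf $L\subset\Ee$. Since $s(\Ee)=0$, one has $\deg (L)=\deg (\Ee)/2$, and maximality of $\deg(L)$ forces $\Ee/L$ to be torsion-free (otherwise the saturation of $L$ in $\Ee$ would be a rank $1$ subsheaf of strictly greater degree), hence to be a line bundle on the smooth curve $C$. In particular every maximal-degree rank $1$ subsheaf is automatically a line subbundle, and its quotient line bundle has the same degree as $L$.

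For the ``decomposable $\Rightarrow$ not $\pounds$'' direction and the accompanying structural statement, I assume $\Ee\cong A\oplus B$ with $A,B$ line bundles. Since $A,B\subset\Ee$ are rank $1$ subsheaves, maximality gives $\deg (A),\deg (B)\le\deg (L)$, and the identity $\deg (A)+\deg (B)=2\deg (L)$ forces equality. If $A\cong B$, I twist by $A^{-1}$ (which preserves $\pounds$ and the set of maximal-degree subsheaves) to reduce to $\Oo^{\oplus 2}$, whose maximal-degree rank $1$ subsheaves are exactly the images of the nonzero maps $\Oo\to\Oo^{\oplus 2}$ modulo scalars, yielding an $\infty^1$-family parametrised by $\PP (H^0(\Oo^{\oplus 2}))\cong\PP^1$. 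If instead $A\ncong B$, I take any rank $1$ subsheaf $N$ of maximal degree and use that any nonzero morphism between line bundles of equal degree on a smooth curve is an isomorphism (injective with torsion cokernel of degree zero) to conclude that each of the projections $N\to A$ and $N\to B$ is either zero or an isomorphism; since $A\ncong B$, exactly one of them is zero and $N$ must coincide with $A$ or with $B$.

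For the converse (not $\pounds$ $\Rightarrow$ decomposable), I suppose $L\ne L'$ are two distinct maximal-degree line subbundles of $\Ee$ and aim to show $\Ee\cong L\oplus L'$. First I observe that $L+L'\subset\Ee$ has rank $2$: otherwise the rank $1$ subsheaf $L+L'$ would contain $L$ and have degree at most $\deg (L)$ by maximality, whence $L+L'=L=L'$, contradicting $L\ne L'$. Consequently the canonical map $\phi\colon L\oplus L'\to\Ee$ surjects onto the rank-$2$ sheaf $L+L'$, so $\ker(\phi)\cong L\cap L'$ has generic rank $0$ and is therefore torsion; but as a subsheaf of the torsion-free sheaf $L$ it is also torsion-free, hence $L\cap L'=0$ and $\phi$ is injective. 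Passing to determinants, $\det(\phi)$ is a nonzero global section of the line bundle $\det(\Ee)\otimes L^{-1}\otimes (L')^{-1}$, which has degree $2\deg(L)-\deg(L)-\deg(L')=0$; such a line bundle is necessarily trivial and that section nowhere vanishing, so $\det(\phi)$ is an isomorphism, and hence so is $\phi$, giving $\Ee\cong L\oplus L'$.

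The step I expect to be the main obstacle is this rank-counting argument for $\ker(\phi)$: one has to carefully distinguish the sheaf-theoretic intersection $L\cap L'$ from the fibrewise one, and it is the torsion vs.\ torsion-free dichotomy applied to $\ker(\phi)\cong L\cap L'$ that unlocks the determinant computation. Once $\phi$ is shown injective, the remaining input, namely that a degree-$0$ line bundle carrying a nonzero global section is trivial with that section nowhere vanishing, is a standard fact on smooth projective curves.
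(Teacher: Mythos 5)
Your proof is correct and follows essentially the same route as the paper's: the decomposable direction via the two projections onto the summands, and the converse by showing that the natural map $L\oplus L'\to \Ee$ from two distinct maximal-degree subsheaves is an injective map of rank $2$ bundles of equal degree, hence an isomorphism. Your determinant argument for the last step and your explicit $\PP^1$-family in the case $\Ee\cong L^{\oplus 2}$ are just slightly more detailed renderings of the same ideas.
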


\begin{proof}
Assume $\Ee$ decomposable, say $\Ee \cong L_1\oplus L_2$ with $L_1$ and $L_2$ line bundles on $C$ with $\deg (L_2)\ge \\deg
(L_1)$.
Since  $s(\Ee )=0$ we in particular have $s(\Ee )\le 0$ and $\deg (L_2)=\deg (L_1)$. 
Let
$\pi _i:\Ee \to L_i$ denote the projections. Let $L$ be a maximal degree rank $1$ subsheaf $L\subset L_1\oplus L_2$, then
there is
$i\in \{1,2\}$ with $\pi _{i|L} \ne 0$. Hence $\deg (L)\le \deg (L_i)$ and equality holds if and only if $\pi _i$ induces
an isomorphism $L\to L_i$. The maximality property of $\deg (L)$ implies $\deg (L) =\deg (L_i)$. We get the second assertion
of the lemma and the ``only if '' part of the first assertion.

Now assume that $\pounds$ fails and take rank $1$ subsheaves $L, M$ of $\Ee$ with maximal
degree. $M$ and $L$ may be isomorphic as abstract line bundles, but they are supposed to be different subsheaves of $\Ee$.
Since $\deg (L)= \deg (\Ee )/2 =\deg (M)$, we have $L\nsubseteq M$ and $M\nsubseteq L$. Hence the map $f: L\oplus M\to \Ee$
induced by the inclusions $L\hookrightarrow \Ee$ and $R\hookrightarrow \Ee$ have generic rank $2$. Since $f$ has generic rank
$2$
and $L\oplus M$ is a rank $2$ vector bundle, $f$ is injective. Since $s(\Ee )=0$, we have $\deg (L\oplus M)=\deg (\Ee)$. Thus
$f$ is an isomorphism, concluding the proof of the lemma.
\end{proof}

\section{Surfaces with infinitely many twistor lines}

Let now $Y\subset \CC\PP^3$ be an integral ruled projective surface of degree $>1$ and let $u: X\to Y$ denote the normalization map. Assume that $Y$ is not a cone. Then, $X$ is a $\CC\PP^1$-bundle on a smooth curve $C$, i.e. there is a rank $2$ vector
bundle $\Ee$ on $C$ such $X\cong \PP (\Ee)$.
Let $v: \PP (\Ee )\to C$ denote the map with $\CC\PP^1$ as fibers.
In particular $u$ sends each fiber of the ruling $v : \PP (\Ee) \to C$ to a line of $\mathbb {CP}^{3}$. 
 The map $v$ is a locally trivial fibration (both in the Zariski and the euclidean to topology), and the curve $C$ may be obtained in the following way. Fix a general hyperplane $H\subset \CC\PP^3$. Since $H$ is general, $H\cap Y$ is an integral plane curve. The curve $C$ is the normalization of the curve $H\cap Y$.

In this section we discuss the existence or the non-existence of integral degree $d$ surfaces $Y\subset \CC\PP^3$, $d\ge 3$,
containing infinitely many twistor lines. As said in the introduction, any integral degree $d$ surface $Y\subset \CC\PP^3$ containing at least $d^2+1$
twistor lines is $j$-invariant. Hence, if $Y$ contains infinitely twistor lines, then $j(Y)=Y$. Since any two twistor lines are disjoint and a cone has only finitely many curves not through the vertex, we may
exclude cones (moreover no smooth surface of degree $>2$ contains infinitely many lines, therefore we need to allow singular surfaces).
Hence, our peculiar situation fits well in the construction described at the beginning of the section.

A first interesting result is the following, concerning the property $\pounds$ (and the semistability).

\begin{theorem}\label{x1}
Let $Y\subset \CC\PP^3$ be an integral surface containing infinitely many twistor lines. Let $\PP (\Ee )$, $\Ee$ a rank $2$
vector bundle on a smooth curve $C$, be the normalization of $Y$. Then $\Ee$ has not $\pounds$ and in particular, by Lemma~\ref{x1.001}, it is
semistable.
\end{theorem}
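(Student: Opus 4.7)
\emph{Proof plan.} The strategy is to lift $j|_Y$ to an anti-holomorphic involution $\tilde j:X\to X$ of the normalization $X=\PP(\Ee)$, to show that $\tilde j$ preserves the ruling $v:X\to C$ fiberwise, and then to derive a contradiction between a hypothetical unique minimal section $B_0$ and the fact that $\tilde j$ restricts to a fixed-point-free antipodal map on a general fiber.

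Since $Y$ contains more than $d^2$ twistor lines it is $j$-invariant, and $j|_Y:Y\to Y$ is an anti-holomorphic involution. By the universal property of the normalization $u:X\to Y$, this lifts uniquely to an anti-holomorphic involution $\tilde j:X\to X$ satisfying $u\circ\tilde j=j\circ u$. Choose the ruling $v:\PP(\Ee)\to C$ so that infinitely many fibers $F_t=v^{-1}(t)$ are mapped isomorphically by $u$ onto twistor lines $\ell_t\subset Y$; this is possible because the infinitely many twistor lines contained in $Y$ form (up to finitely many exceptions) a 1-parameter family matching one of the rulings of $X$. For each such $t$, the equality $j(\ell_t)=\ell_t$ together with $u\circ\tilde j=j\circ u$ forces $\tilde j(F_t)=F_t$, so $\tilde j$ preserves $F_t$ for every $t$ in a Zariski-dense subset $U\subset C$; by real-analyticity of $\tilde j$ this persists for every $t\in C$. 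Thus $\tilde j$ preserves every fiber of $v$, induces the identity on $C$, and on each fiber $F_t$ with $t\in U$ the involution $\tilde j|_{F_t}$ is conjugate via the isomorphism $u|_{F_t}$ to the fixed-point-free antipodal involution $j|_{\ell_t}$.

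Now assume, for contradiction, that $\Ee$ has $\pounds$, and let $B_0\subset X$ be the unique section of minimal self-intersection, corresponding to the unique maximal-degree rank $1$ subsheaf $L\subset\Ee$. The image $\tilde j(B_0)$ is again a complex submanifold of $X$, since the differential of an anti-holomorphic map sends complex subspaces of tangent spaces to complex subspaces; it is a section of $v$ because $\tilde j$ maps each fiber bijectively to itself; and it has the same self-intersection as $B_0$, because an anti-holomorphic diffeomorphism of a complex surface of real dimension four preserves the natural orientation and hence intersection numbers of real cycles. By the property $\pounds$, $\tilde j(B_0)=B_0$. Since $v\circ\tilde j=v$ and $v|_{B_0}:B_0\to C$ is an isomorphism, $\tilde j|_{B_0}$ must be the identity, so every point of $B_0$ is fixed by $\tilde j$. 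But picking any $t\in U$, the intersection point $B_0\cap F_t$ is then a fixed point of $\tilde j|_{F_t}$, contradicting the fixed-point-freeness of the antipodal involution on the twistor line $\ell_t$.

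The most delicate step is the propagation from ``$\tilde j(F_t)=F_t$ on a Zariski-dense set of fibers'' to every fiber, which uses the real-analyticity of $\tilde j$ together with the hypothesis of \emph{infinitely many} (rather than merely $>d^2$) twistor lines; once $\tilde j$ is known to preserve the ruling fiberwise, the conclusion is a short formal consequence of the uniqueness of $B_0$, and the semistability assertion then follows from Lemma~\ref{x1.001}.
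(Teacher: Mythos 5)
Your overall strategy is the same as the paper's: assume $\pounds$, observe that the resulting distinguished section is forced to be $j$-invariant (the paper phrases this downstairs in $Y$ via the minimal-degree curve $D=u(T)$, you phrase it upstairs in $X$ via the minimal self-intersection section $B_0$), and then contradict the fixed-point-freeness of $j$ on a twistor line. However, your proof contains one step that is genuinely false: the claim that $\tilde j(F_t)=F_t$ on a Zariski-dense set of $t$ ``persists by real-analyticity'' to every fiber, so that $\tilde j$ induces the identity on $C$. The induced map $\bar j:C\to C$ (defined by $v\circ\tilde j=\bar j\circ v$) is \emph{anti}-holomorphic, and an anti-holomorphic involution of a curve can fix an infinite set without being the identity: its fixed locus is a real-analytic set, typically a union of circles (think of $z\mapsto\bar z$ on $\CC\PP^1$, which fixes all of $\RR\PP^1$). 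Indeed, in the constructions of Theorem~\ref{ox1} of the paper the twistor fibers correspond exactly to the real points $C(\RR)$ of a real structure on $C$, and $\bar j$ is that nontrivial real structure, not the identity. Consequently your intermediate conclusions ``$\tilde j$ preserves every fiber'' and ``$\tilde j|_{B_0}$ is the identity'' are both false in general.

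Fortunately the error is local and the argument is easily repaired, landing essentially on the paper's proof: you do not need $\bar j=\mathrm{id}$, only that $\tilde j(B_0)=B_0$ (which you correctly derive from uniqueness of the minimal section, since $\tilde j(B_0)$ is again a section of the same self-intersection) and that $\tilde j(F_t)=F_t$ for a \emph{single} twistor fiber $F_t$ (which holds for every $t\in U$, no propagation needed). Since $B_0$ is a section, $B_0\cap F_t$ is a single point $p$, and $\tilde j(p)\in\tilde j(B_0)\cap\tilde j(F_t)=B_0\cap F_t=\{p\}$ forces $\tilde j(p)=p$, contradicting that $u|_{F_t}$ conjugates $\tilde j|_{F_t}$ to the fixed-point-free involution $j|_{\ell_t}$. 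This is exactly the paper's contradiction, stated there as: $z$ and $j(z)$ are two distinct points of $D\cap\ell$, so the fiber would meet the section $T$ twice.
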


\begin{proof}
We know that $j(Y)=Y$, that $Y$ is not a cone and that $Y$ contains infinitely many twistor lines appearing as lines of
the ruling. Let $u: \PP(\Ee )\to Y$ denote the normalization map. Assume that $\Ee$ has $\pounds$ and take
$T\subset \PP(\Ee)$ the section of the ruling $v: \PP (\Ee )\to C$ associated to the unique rank $1$ line subbundle L of $\Ee$ with maximal degree. 
The maximality of the integer $\deg (L)$ implies that $L$ is a rank $1$ subbundle of $\Ee$, i.e. that $\Ee/L$ is a line bundle on $C$ and that $\deg (\Ee/L)$ is minimal degree of a line bundle $M$ such that there
is a surjective map $f: \Ee \to M$. Surjective maps $f: \Ee \to M$ (or, equivalently, embedding of rank $1$ subbundles $R\hookrightarrow \Ee$) corresponds to sections of the ruling $v$ (\cite[Proposition V.2.6]{h}). 
Since $u$ is the
normalization map, $u$ is finite. 
Thus $L\subset \PP(\Ee)$ corresponds to a minimal degree curve $D\subset Y$ which can be seen exactly as the image of $T$, i.e. $D=u(T)$.
Since $T$ intersects each fiber of $Y$, $D$ intersects
each line of the ruling of $Y$. Each section of $v$ different from $T$ has as image in $\CC\PP^3$ a curve
of degree $>\deg (D)$. Thus, since $deg(j(D))=deg(D)$, then the section giving $j(D)$ equals the one giving $D$ and hence $j(D)=D$. Fix a twistor line $\ell$ of the ruling and take $z \in D\cap \ell$ ($z$ exists, because each
fiber of $v$ meets $T$). Since
$\ell$ is a twistor line, we have $j(z)\in \ell$. Since $j(D)=D$, we have $j(z)\in D$. Since $j: \CC\PP^3\to \CC\PP^3$ has no
fixed point, then $\ell$ contains at least two different points of $D$. Hence the fiber of $v$ over $v(u^{-1}(\ell))$ meets $T$ at at least two
different points, a contradiction.
\end{proof}

As a corollary of the previous result, we obtain immediately the following.

\begin{proof}[Proof of Proposition~\ref{x2}]
Set $d:= \deg (Y)$. By Theorem~\ref{x1} the normalization of $Y$ is associated to a degree $d$ rank $2$ vector bundle on $C$.
Since $Y$ is rational, the genus $g$ of $C$ is not positive, therefore $C\cong \CC\PP^{1}$. 
But then the normalization of $Y$ is the Hirzebruch surface with invariant
$s=s(\Ee )=0$ (see Remark~\ref{Hir}).
Thanks to Theorem~\ref{x1} and Lemma~\ref{decomp} two line bundles on $\CC\PP^{1}$ of the same degree are isomorphic,
thus $\Ee \cong L^{\oplus 2}$ with $L\cong \Oo _{\CC\PP^1}(d/2)$. Therefore $d$ is even and $\PP (L^{\otimes 2})
\cong \PP (\Oo _{\CC\PP^1}^{\oplus 2} )\cong \CC\PP^1\times \CC\PP^1$.
\end{proof}

To remove the hypothesis of rationality in Proposition~\ref{x2} we need the following Lemma whose proof is in the same spirit of the one of Theorem~\ref{x1}.

\begin{lemma}\label{jD}
Let $Y\subset \CC\PP^3$ be an integral surface containing infinitely many twistor lines. Let $\PP (\Ee )$, $\Ee$ a rank $2$
vector bundle on a smooth curve $C$, be the normalization of $Y$. 
Let $L\subset \Ee$ be a line bundle with maximal degree and $D\subset Y$ be a minimal degree curve which is the image of a section $T$ of $v$ corresponding to $L$. Then $j(D)\neq D$.
\end{lemma}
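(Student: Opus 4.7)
The plan is to argue by contradiction and re-run the geometric step that ends the proof of Theorem~\ref{x1}, since on inspection that step only used the equality $j(D)=D$ rather than the full property $\pounds$ (the role of $\pounds$ there was merely to force $j(D)=D$ from minimality of the degree of $D$). So suppose $j(D)=D$; I will derive the same contradiction.

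Since $Y$ is not a cone, all twistor lines contained in $Y$ appear as members of the ruling $v:\PP(\Ee)\to C$ described at the beginning of the section. The normalization map $u:\PP(\Ee)\to Y$ is finite and birational, hence an isomorphism away from a proper closed subset; in particular only finitely many fibers of $v$ can fail to map isomorphically onto their image, and only finitely many lines of the ruling of $Y$ can have more than one fiber of $v$ lying above them. Combined with the hypothesis that $Y$ contains infinitely many twistor lines, this lets me pick a twistor line $\ell$ of the ruling for which $u^{-1}(\ell)=F$ is a single fiber of $v$ and $u_{|F}:F\to\ell$ is an isomorphism.

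Because $T$ is a section of $v$, the intersection $T\cap F$ is a single point $x$, and $z:=u(x)$ lies in $D\cap\ell$. Since $\ell$ is a twistor line, $j(\ell)=\ell$, so $j(z)\in\ell$; combined with the assumption $j(D)=D$ this gives $j(z)\in D\cap\ell$. The fact that $j:\CC\PP^3\to\CC\PP^3$ is fixed-point-free forces $j(z)\ne z$, so $\ell$ contains at least two distinct points of $D$. Pulling back through the isomorphism $u_{|F}$ yields two distinct points of $T$ on the single fiber $F$, contradicting the fact that $T$ is a section of $v$.

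The only point needing a little care is the selection of $\ell$, which uses both the infinitude of twistor lines in $Y$ and the standard fact that a finite birational morphism is an isomorphism over a cofinite subset of the base of a ruling; no new ingredients are required beyond those already present in the proof of Theorem~\ref{x1}.
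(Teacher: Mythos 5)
Your argument is correct and is essentially the paper's own proof: the paper likewise assumes $j(D)=D$, takes a twistor line $\ell$ of the ruling and a point $z\in D\cap \ell$, uses $j(\ell)=\ell$, $j(D)=D$ and the fixed-point-freeness of $j$ to produce two distinct points of $D$ on $\ell$, and contradicts the fact that the section $T$ meets the corresponding fiber of $v$ only once. Your added care in selecting $\ell$ so that $u^{-1}(\ell)$ is a single fiber mapped isomorphically onto $\ell$ is a refinement the paper leaves implicit, but the underlying argument is identical.
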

\begin{proof}
Assume $j(D)=D$. Thus $j$ induces an anti-holomorphic involution of $D$. Fix a twistor line $\ell$ of the ruling and take $z \in D\cap \ell$. Since
$\ell$ is a twistor line, we have $j(z)\in \ell$. Since $j(D)=D$, we have $j(z)\in D$. Since $j: \CC\PP^3\to \CC\PP^3$ has no
fixed point, $\ell$ contains at least two different points of $D$. Hence the fiber $v$ over $v(u^{-1}(\ell))$ meets $D$ at at least two
different points, a contradiction.
\end{proof}

\begin{proof}[Proof of Theorem \ref{bbb1}:]
Since any plane contains exactly one twistor line, we may assume $d\ge 3$. Let $u: X=\PP(\Ee)\to Y$ denote the normalization map. 
We recall that there is a smooth and connected curve $C$ and a rank $2$ vector bundle $\Ee$ on $C$ such that $X=\PP(\Ee)$ and $u$ sends each fiber of the ruling $v : \PP (\Ee) \to C$ to a line of $\mathbb {CP}^{3}$. 

Let $L\subset \Ee$ be a line bundle with maximal degree. 
As explained in the proof of Theorem~\ref{x1}, $L\subset \PP(\Ee)$ corresponds to a minimal degree curve $D\subset Y$ which is the image of a section of $v$. Since $j(Y)=Y$, $\deg (j(D)) =\deg (D)$ and (by Lemma~\ref{jD}) $j(D)\neq D$, then $j(D)$ corresponds to a maximal degree line subbundle $R\subset \Ee$ with $\deg ({R}) =\deg (L)$ and $R \neq L$.

%

Since $j(D)\ne D$ and $D$ is an integral curve, the set $S:= j(D)\cap D$ is finite. Note that $j(S)=S$. Since the anti-holomorphic involution $j$ has no base points then $b:= |S|$ is even. Moreover, $j_{|D} : D\to j(D)$ is a bijection and hence $j$ induces an anti-holomorphic involution $\hat{\j}: X\to X$.

Now we prove that $d$ is even. Let $T_1\subset X$ and $T_2\subset X$ be the sections of $v$ such that $u(T_1) =D$ and $u(T_2) =j(D)$. Since $u_{|T_1}: T_1\to D$
and $u_{|T_2}: T_2\to j(D)$ are bijections, $\hat{\j}$ induces a bijection $T_1\to T_2$ and $S':= T_1\cap T_2$ has cardinality $b$. For any divisors $A, B$ on $X$
let $A\cdot B$ denote the intersection product in the Chow ring of $X$, or equivalently, the cup product $H^2(X,\CC )\times H^2(X,\CC) \to H^4(X,\CC) \cong \CC$. We have $A\cdot B\in \ZZ$. For $p\in S'$ (if any) let $c_p$ be the degree of the connected component containing $p$ of the zero-dimensional scheme $T_1\cap T_2$ (scheme-theoretic intersection). Since $\hat{\j}$ is an anti-holomorphic isomorphism, we have $c_{j({p})} =c_p$ for all $p\in S'$. Since $T_1\cap T_2$ is finite, we have $T_1\cdot T_2 =\sum _{p\in S'} c_p$. Decomposing $S'$ into the disjoint union of pairs $\{o,\hat{\j}(o)\}$ we get that $T_1\cdot T_2$ is an even non-negative integer. Since $Y$ is not a cone, there is an ample and base point free
line bundle $\Oo _X(1)$ on $X$ such that $u$ is induced by a $4$-dimensional linear subspace of $H^0(\Oo _X(1))$ and $d =\Oo _X(1)\cdot \Oo _X(1)$. We have $\mathrm{Pic}(X) \cong v ^\ast (\mathrm{Pic}({C}))\oplus \ZZ T_1$ (\cite[Proposition V.2.3]{h}). Write $\sim$ for the numerical equivalence of divisors and line bundles on $X$: by definition, two fibers $F$ and $F'$ are equivalent if $A\cdot F =A\cdot F'$ for each divisor $A$.
Let $F$ denote the numerical equivalence class of a fiber of $v$. Since two different fibers of $v$ are disjoint, we have $F\cdot F =0$. For any degree $x$ line bundle $A$ on $C$ we have $v ^\ast (A) \sim xF$. Since
$Y$ is ruled by lines, we have $\Oo _X(1)\cdot F = 1$. Thus there is an integer $x$ such that $\Oo _X(1) \sim T_1+xF$. Since $T_2$ is a section of $v$, there is an integer $y$ such that $T_2 \sim T_1+yF$. We have $\deg (D) =\Oo _X(1)\cdot T_{1}= (T_1+xF) \cdot T_1 = T_1\cdot T_1 +x$ and $\deg (j(D)) =\Oo _X(1)\cdot T_{2} = (T_1+xF)\cdot (T_1+yF) =T_1\cdot T_1 +x+y$. Since $\deg (j(D)) =\deg (D)$, we have $y=0$. Thus $T_1\cdot T_1 =T_1\cdot T_2$. Hence $T_1\cdot T_1$ is an even non-negative integer.
We have $d =\Oo _X(1)\cdot \Oo _X(1) = (T_1+xF)\cdot (T_1+xF) = T_1\cdot T_1 +2x$. Since $T_1\cdot T_1$ is even, then $d$ is even.
\end{proof}


We now provide two different methods of constructing examples of integral surfaces with infinitely many twistor lines.
We begin with Proposition~\ref{x3}. As said in the introduction, the theory of quaternionic
slice regularity can be exploited, in this case, to prove the mentioned result. In fact, as it is
explained in~\cite{altavilla,altavillasarfatti,gensalsto}, any slice regular function $f:\Omega\subset\HH\to\HH$
can be lifted, with an explicit parametrization, to a holomorphic curve $\tilde f:\CC\PP^{1}\times\CC\PP^{1}\to\CC\PP^{3}$. This geometric construction is the core of the proof.

\begin{proof}[Proof of Proposition~\ref{x3}]
For any even degree $d$ it is possible to construct a rational ruled surface $Y\subset \CC\PP^3$
parameterized by the twistor lift $\tilde{f}$ of a slice regular function $f$~\cite{altavilla, altavillasarfatti,gensalsto}, i.e.
$$\tilde{f}: \CC\PP^1\times  \CC\PP^1\to Y,\quad ([s,u],[1,v])\mapsto [s,u,sg(v)-u\hat{h}(v),sh(v)+u\hat{g}(v)],$$
where $g,\hat{g}, h$ and $\hat{h}$ are holomorphic functions defined on $\CC$. As it is explained in Remark 4.9 of~\cite{altavillasarfatti}, if $\hat{g}(v)=\overline{g(\bar v)}$ and $\hat{h}(v)=\overline{h(\bar v)}$, then, 
$\deg(Y)$ is even and $Y$ contains infinitely many twistor fibers (namely the fibers over $f(\RR)$). Moreover, suitably choosing these function, it is possible to construct a birational morphism
between $\CC\PP^{1}\times\CC\PP^{1}$ and $Y$.\end{proof}

We now pass to the last part in which we prove Theorem~\ref{ox1}.
For what concern this last part, we notice that the map $j$ defined in Equation~\eqref{mapjgrass} can be decomposed as $j = ^{\bar{\ }}\circ \sigma = \sigma \circ ^{\bar{\ }}$,
where $^{\bar{\ }}, \sigma:\CC\PP^{5}\to\CC\PP^{5}$ are defined as
\begin{align*}
\sigma ([t_1:t_2:t_3:t_4:t_5:t_6]) & = [t_1:t_5:-t_4:-t_3:t_2:t_6]\\
\bar{[t_1:t_2:t_3:t_4:t_5:t_6]} & = [\bar{t}_1:\bar{t}_2:\bar{t}_3:\bar{t}_4:\bar{t}_5:\bar{t}_6]
\end{align*}

We recall that $\CC\PP^1$ is defined over $\RR$ with $\RR\PP^1$ as its real points. We recall that if $g>0$ there are infinitely many pairwise non-isomorphic smooth and connected complex projective curve $C$ of genus $g$, defined over $\RR$ and with $C(\RR )\ne \emptyset$ \cite{gh, sep}.

\begin{proof}[Proof of Theorem~\ref{ox1}:]
Set $F: = \{t_2-t_5 = t_3+t_4=0\}\subset \CC\PP^5$ and $E:=Gr (2,4)\cap F$. Note that $\sigma _{|F}$ is the identity map. Take homogeneous coordinates $t_1,t_5,t_4,t_6$ on $F \cong \CC \PP^3$.
 Note that $E$ is the smooth quadric surface of $F$ with $t_1t_6 -t_5^2-t_4^2=0$ as its equation, i.e. over $\RR$ the quadric $E$ has signature $(1,3)$. So $E$ has many real points, but it is not projectively isomorphic to $\RR\PP^1\times \RR\PP^1$. Set $F':= F\setminus \{t_6=0\}\cong \CC ^3$

Let $C$ be a smooth and geometrically connected projective curve defined over $\RR$ and with $C(\RR )\ne \emptyset$. In this case $C(\RR )$ is topologically isomorphic to the disjoint union of $k$ circles, with $1\le k\le g+1$ (we only use that $C(\RR )$ is infinite and hence it is dense in  $C(\CC )$ for the Zariski topology).
Fix $p\in C  (\RR )$ and set $C':= C\setminus \{p\}$. $C'$ is an affine and connected rational curve defined on $\RR$. Thus there are non-constant algebraic maps $f_4: C'\to \CC$
and $f_5: C'\to \CC$ defined over $\RR$. Set $f_1:= f_4^2+f_5^2$. 

The map $(f_1,f_4,f_5): C'\to \CC ^3$ is defined over $\RR$ and it maps $C'$ into $F'\cap E$. Since $C$ is a smooth projective curve, $(f_1,f_4,f_5)$, extends in a unique way to a regular map $\psi : C\to F$. As $(f_1,f_4,f_5)$ is defined over $\RR$, the uniqueness of the
extension $\psi$ gives that $\psi$ commutes with the complex conjugation. The image of $C'$ is contained in $F'\cap E$, therefore we have $D:= \psi ({C}) \subset E\subset Gr(2,4)$ and because $f_5$ is not constant, $D$ is an integral projective curve. Since $C$ and $\psi$ are defined over $\RR$, then $D$ is defined over $\RR$. 
We now state three claims which lead us to the thesis.

\quad \emph{Claim 1:} We may find $f_4$ and $f_5$ such that $\psi$ is birational onto $D$ (i.e.: $C$ is the normalization of $D$).

\quad \emph{Claim 2:} $j(D)=D$.

\quad \emph{Claim 3:} Let $Y$ be the integral surface in $\CC\PP^{3}$ defined by $Y=\cup_{p\in D}\ell_{p}$, where $\ell_{p}$ is the line in $\CC\PP^{3}$ corresponding to $p\in Gr(2,4)$. Then $Y$ contains infinitely many twistor line.

\quad \emph{Proof of Claim 1:} Since $C$ is compact, then $\psi$ is a proper map. Moreover, since $f_5$ is a non-constant algebraic map, then $f_5^2$ is a proper map deleting finitely many
points of $\CC$, i.e. there is a finite set $S\subset \CC$ such that, taking $C'':= \{(f_5^2)^{-1}(\CC \setminus S)\}$, $f_5^2$ induces a proper non-constant map $u': C'' \to \CC\setminus S$. Since $C''$
is an irreducible (affine) curve, the differential of this map vanishes only at finitely many points. Increasing if necessary the finite set $S$ we may assume that $u'$ has everywhere
non-zero differential. Fix any $a\in \CC \setminus S$ and set $S':= f_{5}^{-1}(a)$. $S'$ is a non-empty finite set. To prove Claim 1 it is sufficient to take $f_5$ such that $f_5^2(b) \ne
f_5^2(b')$ for all $b, b'\in S'$ such that $b\ne b'$ and to repeat the same argument for $f_{4}$.

\quad \emph{Proof of Claim 2:} Since $C$ and $\psi$ are defined over $\RR$, $D$ is defined over $\RR$, i.e. complex conjugation induces a real analytic isomorphism
between $D$ and itself. In particular complex conjugation induces a bijection of $D$. Thus to prove Claim 2 it is sufficient to prove that $\sigma (D)=D$. But we notice that
$\sigma _{|F}$ is the identity map and so conclude the proof of Claim 2.

\quad \emph{Proof of Claim 3:} Fix $a\in D(\RR )\subset Gr (2,4)$ and let $\ell_{a}\subset Y$ be the the line associated to $a$. Since $a\in D(\RR)$, $\ell_{a}$ is defined over
$\RR$ and hence the complex conjugation sends $\ell_{a}$ into itself. Thus it is sufficient to prove that $\sigma (\ell_{a})=\ell_{a}$, where $\sigma: \CC\PP^3\to \CC\PP^3$ is the holomorphic involution $\sigma$ defined before on $\CC \PP^5$. 
We have $t_1 = z_0\wedge z_1$, $t_2 = z_0\wedge z_2$, $t_3 = z_0\wedge z_3$, $t_4 = z_1\wedge z_2$, $t_5 = z_1\wedge z_3$ and $t_6 = z_2\wedge z_3$. 
We obtain 
\begin{align*}
\sigma (t_1)&=\sigma (z_0)\wedge \sigma (z_1) = (-z_1)\wedge (z_0) = z_0\wedge z_1 = t_1\\
\sigma (t_2)&=\sigma (z_0)\wedge \sigma (z_2) = (-z_1)\wedge (-z_3) = z_1\wedge z_3 =t_5 \\
\sigma (t_3)&=\sigma (z_0)\wedge \sigma (z_3) = (-z_1)\wedge z_2 = -t_4 \\
\sigma (t_4)&=\sigma (z_1)\wedge \sigma (z_2) = z_0\wedge (-z_3) = -t_3 \\
\sigma (t_5)&=\sigma (z_1)\wedge \sigma (z_3) = z_0\wedge z_2 = t_2\\
\sigma (t_6)&=\sigma (z_2)\wedge \sigma (z_3) = (-z_3)\wedge z_2 = z_2\wedge z_3 =t_6,
\end{align*}
concluding the proof of Claim 3.

After having proven these three claims, to conclude the proof we only need to observe that we may take $f_5$ such that the map $f_5^2: C' \to \CC$  has degree $\ge d_0$.

\end{proof}

The proof of Theorem~\ref{ox1} allows us to give several examples, all of them with $Y$ with even degree, as explained in the following example.
\begin{example}
In the set-up of Theorem~\ref{ox1} take $F =\CC\PP^3$ with homogeneous coordinates $t_1,t_4,t_5, t_{6}$.
Let $E\subset F$ be the smooth quadric surface with $t_1t_6= t_5^2+t_4^2$ as its equation. All integral projective curves
$D\subset E$ defined over $\RR$ gives examples of surfaces $Y\subset \PP^3$ with infinitely many twistor lines. Take as $D$ a
complete intersection of $E$ with a smooth quadric surface defined over $\RR$. We get the existence of a degree $4$ elliptic
ruled surface $Y\subset \CC\PP^3$ with infinitely many twistor lines. Taking general intersections of $E$ with a degree $t\ge 3$
hypersurface of $F$ defined over $\RR$ we find smooth $C$ with degree $2t$ and genus $t^2-2t+1 $ (adjunction formula).
This construction only gives ruled surfaces $Y$ of even degree (even allowing singular curves $D$), because $E$ contains only
even degree curves defined over $\RR$.
\end{example}

%
%
%
%
%

\end{document}